\documentclass[a4paper,11pt]{article}
\textwidth380pt
\hoffset-40pt
\voffset+0pt
\headsep-20pt
\textheight510pt

\usepackage{amsmath, amsfonts, amscd, amssymb, amsthm, enumerate, xypic}

\def\id{\text{id}}

\newcommand{\Mat}{\operatorname{M}}

\newcommand{\GL}{\operatorname{GL}}
\newcommand{\Ker}{\operatorname{Ker}}

\newcommand{\im}{\operatorname{Im}}
\newcommand{\car}{\operatorname{car}}

\newcommand{\Sp}{\operatorname{Sp}}
\newcommand{\rk}{\operatorname{rk}}

\renewcommand{\setminus}{\smallsetminus}


\def\K{\mathbb{K}}

\def\C{\mathbb{C}}

\def\N{\mathbb{N}}



\theoremstyle{definition}
\newtheorem{Def}{Definition}
\newtheorem{Not}[Def]{Notation}

\theoremstyle{plain}
\newtheorem{theo}{Theorem}
\newtheorem{prop}[theo]{Proposition}
\newtheorem{cor}[theo]{Corollary}
\newtheorem{lemme}[theo]{Lemma}

\theoremstyle{plain}

\theoremstyle{remark}

\title{Sums of two triangularizable quadratic matrices over an arbitrary field}
\author{Cl\'ement de Seguins Pazzis\footnote{Professor of Mathematics at Lyc\'ee Priv\'e Sainte-Genevi\`eve, 2, rue
de l'\'Ecole des Postes, 78029 Versailles Cedex, FRANCE.}
\footnote{e-mail address: dsp.prof@gmail.com}}

\begin{document}

\thispagestyle{plain}

\maketitle

\begin{abstract}
Let $\K$ be an arbitrary field, and $a,b,c,d$ be elements of $\K$ such that
the polynomials $t^2-at-b$ and $t^2-ct-d$ are split in $\K[t]$. Given a square matrix $M \in \Mat_n(\K)$,
we give necessary and sufficient conditions for the existence of two matrices $A$ and $B$ such that $M=A+B$,
$A^2=a\,A+b\,I_n$ and $B^2=c\,B+d\,I_n$. Prior to this paper, such conditions were known in the case
$b=d=0$, $a \neq 0$ and $c \neq 0$ \cite{dSPidem2} and in the case $a=b=c=d=0$ \cite{Bothasquarezero}.
Here, we complete the study, which essentially amounts to determining when a matrix is the sum of an idempotent and a square-zero
matrix. This generalizes results of Wang \cite{Wanggeneral} to an arbitrary field, possibly of characteristic~$2$.
\end{abstract}

\vskip 2mm
\noindent
\emph{AMS Classification :} 15A24; 15B33.

\vskip 2mm
\noindent
\emph{Keywords :} quadratic matrices, rational canonical form, characteristic two, companion matrices,
idempotent matrix, square-zero matrix

\section{Introduction}

\subsection{Basic notations and aims}

Let $\K$ be an arbitrary field, and $\overline{\K}$ an algebraic closure of it. We denote by $\car(\K)$ the characteristic of $\K$.
We denote by $\Mat_n(\K)$ the algebra of square matrices with $n$ rows and entries in $\K$, and by $I_n$
its identity matrix. Similarity of two square matrices $A$ and $B$ is denoted by $A \sim B$.
Given $M \in \Mat_n(\K)$, we denote by $\Sp(M)$ the set of eigenvalues
of $M$ in the field $\K$. We denote by $\N$ the set of non-negative integers, and by $\N^*$ the set of positive ones.

A matrix of $\Mat_n(\K)$ is called \textbf{quadratic} when it is annihilated by a polynomial of degree two.
More precisely, given a pair $(a,b)\in \K^2$, a matrix $A$ of $\Mat_n(\K)$ is called \textbf{$(a,b)$-quadratic} when
$A^2=a\,A+b\,I_n$. In particular, a matrix is $(1,0)$-quadratic if and only if it is idempotent, and
it is $(0,0)$-quadratic if and only if it is square-zero.

Let $(a,b,c,d)\in \K^4$. A matrix is called an \textbf{$(a,b,c,d)$-quadratic sum}
when it may be decomposed as the sum of an $(a,b)$-quadratic matrix and of a $(c,d)$-quadratic one.
Note that a matrix which is similar to an $(a,b,c,d)$-quadratic sum is an $(a,b,c,d)$-quadratic sum itself.
Our aim here is to give necessary and sufficient conditions for a matrix of $\Mat_n(\K)$ to be
an $(a,b,c,d)$-quadratic sum.
In \cite{Wanggeneral}, Wang has expressed such conditions in terms of rational canonical forms when $\K$
is the field of complex numbers, and his proof actually encompasses the more general case of an algebraically closed field
of characteristic not $2$. In our recent \cite{dSPidem2}, we have worked out the case $b=d=0$, $a \neq 0$ and $c \neq 0$, i.e., we have
determined when a matrix may be written as $a\,P+c\,Q$, where $P$ and $Q$ are idempotent matrices (this generalized
earlier results of Hartwig and Putcha \cite{HP}). In \cite{Bothasquarezero},
Botha has worked out the case $a=b=c=d=0$ for an arbitrary field, generalizing results of Wang and Wu \cite{WuWang};
as in \cite{dSPidem2}, fields of characteristic 2 yield somewhat different results than the others.

The purpose of this paper is to solve the remaining cases, assuming that the polynomials $t^2-a\,t-b$
and $t^2-c\,t-d$ are split over $\K$.

\vskip 3mm
The basic strategy is to reduce the situation to a more elementary one.
Assume, for the rest of the section, that $t^2-a\,t-b$
and $t^2-c\,t-d$ are split over $\K$, and let $\alpha$ be a root of $t^2-a\,t-b$ and $\beta$ be one of $t^2-c\,t-d$.
Then an $(a,b)$-quadratic matrix is a matrix of the form $\alpha\,I_n+ P$, where $P$ is $(a-2\alpha,0)$-quadratic.
We deduce that
a matrix of $\Mat_n(\K)$ is an $(a,b,c,d)$-quadratic sum if and only if it
splits as $(\alpha+\beta).I_n+M$, where $M$ is an $(a-2\alpha,0,c-2\beta,0)$-quadratic sum. \\
We are thus reduced to studying the case $b=d=0$. \\
In the case $b=d=0$ and $a \neq 0$, notice furthermore that an $(a,b,c,d)$-quadratic sum
is simply the product of $a$ with a $\bigl(1,0,\frac{c}{a},0\bigr)$-quadratic sum.
Therefore, the case $b=d=0$ is essentially reduced to three cases:
\begin{enumerate}[(i)]
\item $b=d=0$, $a\neq 0$ and $c \neq 0$;
\item $a=b=c=d=0$;
\item $a=1$ and $b=c=d=0$.
\end{enumerate}
Case (i) has been dealt with in \cite{dSPidem2}, and case (ii) more recently in \cite{Bothasquarezero}.
Therefore, only case (iii) remains to be studied in order to complete the case where both polynomials
$t^2-a t-b$ and $t^2-c t-d$ are split over $\K$. In other words, it remains to determine
which matrices may be decomposed as the sum of an idempotent and a square-zero matrix.
This has been done by Wang in \cite{Wanggeneral} for the case $\K=\C$. Our aim is to generalize his results.

\subsection{Main theorem}

\begin{Def}
Let $(u_n)_{n \geq 1}$ and $(v_n)_{n \geq 1}$ be two non-increasing sequences of non-negative integers.
Let $p>0$ be a positive integer.
We say that $(u_n)$ and $(v_n)$ are \textbf{$p$-intertwined} when
$$\forall n \geq 1, \; u_{n+p} \leq v_n \quad \text{and} \quad v_{n+p} \leq u_{n.}$$
\end{Def}

\begin{Not}
Given $A \in \Mat_n(\K)$, $\lambda \in \overline{\K}$ and $k \in \N^*$, we set
$$n_k(A,\lambda):=\dim \Ker (A-\lambda I_n)^k-\dim \Ker (A-\lambda I_n)^{k-1},$$
and
$$j_k(A,\lambda):=n_k(A,\lambda)-n_{k+1}(A,\lambda)$$
i.e., $n_k(A,\lambda)$ (respectively, $j_k(A,\lambda)$) is the number of blocks of size $k$ or more
(respectively, of size $k$) associated to the eigenvalue $\lambda$ in the Jordan reduction of $A$.
\end{Not}

Our main theorem follows.

\begin{theo}\label{maintheo}
Let $M \in \Mat_n(\K)$. The following conditions are equivalent:
\begin{enumerate}[(i)]
\item $M$ is a $(1,0,0,0)$-quadratic sum.
\item $\forall \lambda \in \overline{\K} \setminus \{0,1\}, \;\forall k \in \N^*, \;  j_k(M,\lambda)=j_k(M,1-\lambda)$,
the sequences $\bigl(n_k(M,0)\bigr)_{k \geq 1}$ and $\bigl(n_k(M,1)\bigr)_{k \geq 1}$ are $2$-intertwined,
and, if $\car(\K)\neq 2$, the Jordan blocks of $M$ for the eigenvalue $\frac{1}{2}$ are all even-sized.
\item There are matrices $A\in \Mat_p(\K)$ and $B \in \Mat_{n-p}(\K)$ such that $M \sim A \oplus B$, where
all the invariant factors of $A$ are polynomials of $t(t-1)$ and $A$ has no eigenvalue in $\{0,1\}$,
the matrix $B$ is triangularizable with $\Sp(B) \subset \{0,1\}$, and the
sequences $\bigl(n_k(B,0)\bigr)_{k \geq 1}$ and $\bigl(n_k(B,1)\bigr)_{k \geq 1}$ are $2$-intertwined.
\end{enumerate}
\end{theo}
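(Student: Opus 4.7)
My plan is to establish the cycle (iii) $\Rightarrow$ (i) $\Rightarrow$ (ii) $\Rightarrow$ (iii). The equivalence (ii) $\Leftrightarrow$ (iii) is combinatorial: on the part of $M$ whose spectrum avoids $\{0,1\}$, the conditions $j_k(M,\lambda)=j_k(M,1-\lambda)$ for $\lambda\notin\{0,1\}$, together with the even-size requirement at $\frac{1}{2}$ when $\car(\K)\neq 2$, are exactly what is needed for the invariant factors there to be polynomials in $s:=t(t-1)$: a pair of Jordan blocks $(t-\lambda)^k$ and $(t-(1-\lambda))^k$ fuses into the single factor $\bigl(s+\lambda(1-\lambda)\bigr)^k$, and at $\frac{1}{2}$ a block of size $2k$ reads as $(s+\frac{1}{4})^k$. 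The primary decomposition then peels off this symmetric part and leaves a complementary summand $B$ with $\Sp(B)\subset\{0,1\}$ on which the $2$-intertwining is the only remaining constraint.

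For (i) $\Rightarrow$ (ii): if $M=P+N$ with $P$ idempotent and $N$ square-zero, then $I_n-M=(I_n-P)+(-N)$ is again a $(1,0,0,0)$-quadratic sum, and its Jordan structure at $\lambda$ equals that of $M$ at $1-\lambda$; this delivers the symmetry for $\lambda\notin\{0,1\}$. For the $2$-intertwining at $\{0,1\}$, I would compare the kernel flags of $M$ at $\varepsilon\in\{0,1\}$: since $N^2=0$, a careful analysis of the interplay between $P$ and $N$ on the top graded pieces $\Ker(M-\varepsilon I_n)^{k+2}/\Ker(M-\varepsilon I_n)^{k+1}$ should produce an injection into $\Ker(M-(1-\varepsilon)I_n)^{k}/\Ker(M-(1-\varepsilon)I_n)^{k-1}$, giving $n_{k+2}(M,\varepsilon)\leq n_k(M,1-\varepsilon)$. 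For the parity of Jordan blocks at $\frac{1}{2}$ in characteristic not $2$, the identity
\[
(2M-I_n)^2=I_n+2\bigl\{2P-I_n,\,N\bigr\}
\]
expresses $(2M-I_n)^2-I_n$ as twice an anticommutator involving a square-zero operator, and a rank argument on the generalized $\frac{1}{2}$-eigenspace of $M$ then shows that the restriction of $2M-I_n$ cannot have any odd-sized Jordan block.

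The central step is the construction (iii) $\Rightarrow$ (i), which I would split between the two summands. For a single companion block of an invariant factor $q(t(t-1))$ on the $A$ side, I view the cyclic module $\K[t]/\bigl(q(t(t-1))\bigr)$ as a free rank-two module over $R:=\K[s]/(q(s))$ with basis $\{1,t\}$. Multiplication by $t$ is then the $R$-matrix
\[
\begin{pmatrix}0&s\\1&1\end{pmatrix}=\begin{pmatrix}0&0\\1&1\end{pmatrix}+\begin{pmatrix}0&s\\0&0\end{pmatrix},
\]
an $R$-idempotent plus an $R$-square-zero matrix; unfolding to $\K$ produces the required $P$ and $N$. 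This construction does not use $q(0)\neq 0$, so it will also handle any invariant factor of $B$ that happens to be a polynomial in $t(t-1)$. For the remainder of $B$, I would pair the Jordan blocks of $B$ at $0$ and at $1$ in decreasing-size order, $\mu_1\geq\mu_2\geq\cdots$ against $\nu_1\geq\nu_2\geq\cdots$; the $2$-intertwining translates into $|\mu_i-\nu_i|\leq 2$ for every $i$, and each mismatched pair $J_{\mu_i}(0)\oplus J_{\nu_i}(1)$ must be decomposed individually by an explicit formula, with leftover unpaired blocks of size $\leq 2$ handled trivially.

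The main obstacle is the explicit construction for the mismatched pairs $J_\mu(0)\oplus J_\nu(1)$ with $|\mu-\nu|\in\{1,2\}$, and checking that these formulas survive in characteristic $2$, where the symmetry at $\frac{1}{2}$ vanishes but the customary arithmetic tricks involving halves also break down. I would expect to work out the smallest cases ($3\times 3$ and $4\times 4$ blocks) by hand and then scale them up by tensoring with a nilpotent factor, mirroring the $R$-linear device used for the $A$ summand.
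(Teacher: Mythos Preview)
Your plan for (ii) $\Leftrightarrow$ (iii) and for the construction on the $A$-summand is essentially the paper's (your $R$-module picture is exactly the companion-matrix lemma). The gap is in (i) $\Rightarrow$ (ii).

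The $I_n-M$ trick does not deliver the symmetry $j_k(M,\lambda)=j_k(M,1-\lambda)$. The identity $j_k(I_n-M,\lambda)=j_k(M,1-\lambda)$ is a tautology valid for \emph{every} matrix $M$; observing that $I_n-M$ is again a $(1,0,0,0)$-quadratic sum adds no information about the Jordan data of $M$ itself, so the sentence ``this delivers the symmetry'' is a non-sequitur. Your sketches for the $2$-intertwining (``a careful analysis \dots\ should produce an injection'') and the $\tfrac12$-parity (``a rank argument'') are likewise programmatic: the anticommutator $\{2P-I_n,N\}$ is neither idempotent nor square-zero, and it is not clear what inequality you would extract from it. The idea you are missing, and which organizes the paper's entire necessity proof, is that $P$ and $N$ both \emph{commute with} $M(M-I_n)$ (a one-line check). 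This lets one pass to the Fitting pieces of $M(M-I_n)$ and treat separately (a) $M$ with no eigenvalue in $\{0,1\}$ and (b) $M$ triangularizable with spectrum in $\{0,1\}$. In case (a), $\Ker P\cap\Ker N=\Ker(P-I_n)\cap\Ker N=\{0\}$ forces $\K^n=\Ker N\oplus\Ker P$ with both summands of dimension $n/2$; in an adapted basis $M\sim\bigl(\begin{smallmatrix}I&D'\\ I&0\end{smallmatrix}\bigr)$, and the companion lemma reads off that every invariant factor is a polynomial in $t(t-1)$, giving the $\lambda\leftrightarrow 1-\lambda$ symmetry and the $\tfrac12$-parity at once, in all characteristics. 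In case (b), the same commutation makes $P,N$ stabilize each $E_k:=\Ker\bigl(M^k(M-I_n)^k\bigr)$; on the subquotient $E_{k+3}/E_k$ one has $M^3(M-I_n)^3=0$, and an explicit block computation there forces the off-diagonal blocks of $N$ to witness Wang's intertwining criterion, yielding $n_{k+3}\le n_{k+1}$.

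For (iii) $\Rightarrow$ (i) on the $B$-summand, your Jordan-pairing scheme is a legitimate alternative to the paper, which instead quotes the other direction of Wang's intertwining theorem. Your combinatorial translation of $2$-intertwining into $|\mu_i-\nu_i|\le 2$ with leftovers of size $\le 2$ is correct, but you then owe explicit decompositions of $J_\mu(0)\oplus J_\nu(1)$ for $|\mu-\nu|\in\{1,2\}$, verified in characteristic $2$; this is doable but is exactly the work the paper avoids by citing Wang.
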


\subsection{Structure of the proof}

The equivalence between conditions (ii) and (iii) of Theorem \ref{maintheo} is a straightforward consequence of the kernel decomposition theorem
and of Proposition 9 of \cite{dSPidem2}, which we restate:

\begin{prop}
Let $A \in \Mat_n(\K)$ and $\alpha \in \K$.
The following conditions are equivalent:
\begin{enumerate}[(i)]
\item The invariant factors of $A$ are polynomials of $t(t-\alpha)$.
\item For every $\lambda \in \overline{\K}$,
\begin{itemize}
\item if $\lambda \neq \alpha-\lambda$, then $\forall k \in \N^*, \; j_k(A,\lambda)=j_k(A,\alpha-\lambda)$;
\item if $\lambda=\alpha-\lambda$, then $\forall k \in \N, \; j_{2k+1}(A,\lambda)=0$.
\end{itemize}
\end{enumerate}
\end{prop}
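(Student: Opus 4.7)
The plan is to translate the algebraic condition~(i) into the structural condition~(ii) on Jordan blocks by working with the auxiliary element $u:=t(t-\alpha) \in \K[t]$. The first step is to realize $\K[u]$ as a nice subring of $\K[t]$: since $t$ satisfies the monic relation $T^{2}-\alpha T-u=0$ over $\K[u]$, the ring $\K[t]$ is a free $\K[u]$-module with basis $(1,t)$, and likewise $\overline{\K}[t]$ over $\overline{\K}[u]$. A $t$-parity argument on the decomposition $P=f(u)+t\,g(u)$ (elements of $\overline{\K}[u]$ have even $t$-degree, while $t\,g(u)$ has odd $t$-degree when $g\neq 0$) then yields $\K[u]=\K[t]\cap\overline{\K}[u]$, so it suffices to characterize membership in $\overline{\K}[u]$.

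Next, I would show directly that a monic $P(t)=\prod_{\lambda}(t-\lambda)^{m_{\lambda}}\in\overline{\K}[t]$ lies in $\overline{\K}[u]$ if and only if $m_{\lambda}=m_{\alpha-\lambda}$ whenever $\lambda\neq\alpha-\lambda$, and $m_{\lambda}$ is even whenever $\lambda=\alpha-\lambda$. Expanding: for $R(x)=c\prod_{j}(x-\mu_{j})^{r_{j}}$ we get $R(u)=c\prod_{j}(t-\lambda_{j})^{r_{j}}\bigl(t-(\alpha-\lambda_{j})\bigr)^{r_{j}}$, where $\lambda_{j}$ is either root of $T^{2}-\alpha T-\mu_{j}$. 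Since distinct $\mu_{j}$'s correspond bijectively to distinct orbits of $\lambda\mapsto\alpha-\lambda$ on $\overline{\K}$, reading this formula in both directions gives the equivalence, the evenness coming from a self-paired orbit contributing $(t-\lambda)^{2r_{j}}$.

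The final step is to apply the criterion to the invariant factors $P_{1}\mid P_{2}\mid\cdots\mid P_{r}$ of $A$, which are stable under base change and factor over $\overline{\K}$ as $P_{r-j+1}(t)=\prod_{\lambda}(t-\lambda)^{s_{j}(\lambda)}$, where $s_{j}(\lambda)$ is the size of the $j$-th largest Jordan block of $A$ at $\lambda$ (and $0$ if fewer than $j$ such blocks exist). Condition~(i) then becomes the conjunction, over $j\geq 1$ and $\lambda\in\overline{\K}$, of the equality $s_{j}(\lambda)=s_{j}(\alpha-\lambda)$ when $\lambda\neq\alpha-\lambda$, and of the evenness of $s_{j}(\lambda)$ when $\lambda=\alpha-\lambda$. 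Quantifying over $j$, the first clause amounts to the equality of the multisets of Jordan sizes at $\lambda$ and $\alpha-\lambda$, equivalently $j_{k}(A,\lambda)=j_{k}(A,\alpha-\lambda)$ for every $k\geq 1$; the second amounts to $j_{2k+1}(A,\lambda)=0$ for every $k\geq 0$. This is exactly~(ii).

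The one point where real care is needed is characteristic~$2$ with $\alpha=0$: there the involution $\lambda\mapsto\alpha-\lambda$ degenerates to the identity, so the naive criterion of invariance under this involution fails to detect $\K[u]=\K[t^{2}]$. The explicit orbit-based expansion above is designed precisely to sidestep that degeneracy via the parity argument encoded in the $\K[u]$-module structure of $\K[t]$; consequently no case distinction on $\car(\K)$ is actually needed.
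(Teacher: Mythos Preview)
The paper does not actually prove this proposition: it is merely restated from \cite[Proposition~9]{dSPidem2} and invoked to deduce the equivalence (ii)\,$\Leftrightarrow$\,(iii) of Theorem~\ref{maintheo}. There is therefore no in-paper argument to compare against.

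Your proof is correct and self-contained. The idea of viewing $\K[t]$ as a free $\K[u]$-module of rank~$2$ with basis $(1,t)$, where $u=t(t-\alpha)$, is the right organizing principle, and it makes the reduction $\K[u]=\K[t]\cap\overline{\K}[u]$ transparent by uniqueness of the module decomposition (your ``parity'' phrasing is a concrete incarnation of that uniqueness). The factorization $u-\mu=(t-\lambda)\bigl(t-(\alpha-\lambda)\bigr)$ then gives exactly the orbit description you use, and reading off the exponents in the invariant factors via the primary decomposition translates the criterion into the Jordan-block statements of~(ii). Your closing remark is well taken: the potentially delicate case $\car(\K)=2$, $\alpha=0$ is absorbed automatically because a self-paired orbit always contributes an even power $(t-\lambda)^{2r}$, and conversely over a perfect field every $(t-\lambda)^{2m}=(t^2-\lambda^2)^m$ lies in $\overline{\K}[t^2]$. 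One small stylistic suggestion: the sentence about parity would read more cleanly if you invoked uniqueness of the $\overline{\K}[u]$-basis expansion directly rather than degree parity, since the latter alone only controls the leading term.
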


\noindent The equivalence of (i) and (iii) is much more involving and takes up the rest of the paper:
\begin{itemize}
\item In Section \ref{redrecon}, we show that the equivalence (i) $\Leftrightarrow$ (iii)
needs to be proven only in the following elementary cases:
\begin{enumerate}[(a)]
\item $M$ has no eigenvalue in $\{0,1\}$ ;
\item $M$ is triangularizable and $\Sp(M) \subset \{0,1\}$.
\end{enumerate}
\item In Section \ref{pasdevpdans01}, we prove that (i) $\Leftrightarrow$ (iii) holds in case (a).
\item In Section \ref{vpdans01}, we prove that (i) $\Leftrightarrow$ (iii) holds in case (b).
\end{itemize}

\section{Reduction and reconstruction principles}\label{redrecon}

\subsection{A reconstruction principle}\label{recon}

Let $M_1$ and $M_2$ be two $(1,0,0,0)$-quadratic sums (respectively in $\Mat_n(\K)$ and $\Mat_p(\K)$).
Split up $M_1=A_1+B_1$ and $M_2=A_2+B_2$, where $A_1,A_2$ are idempotent and $B_1,B_2$ are square-zero.
Then $M_1 \oplus M_2=(A_1\oplus A_2)+(B_1 \oplus B_2)$, while $A_1\oplus A_2$ is idempotent and $B_1 \oplus B_2$ is square-zero.
Therefore $M_1 \oplus M_2$ is a $(1,0,0,0)$-quadratic sum.

\subsection{The basic lemma}

The following lemma is a key tool to analyze quadratic sums in general.

\begin{lemme}\label{corelemma}
Let $(a,b,c,d)\in \K^4$.
Let $A$ and $B$ be respectively an $(a,b)$-quadratic and a $(c,d)$-quadratic matrix of $\Mat_n(\K)$. \\
Then $A$ and $B$ both commute with $(A+B)\bigl((a+c)I_n-(A+B)\bigr)$.
\end{lemme}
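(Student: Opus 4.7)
The plan is to prove the lemma by direct algebraic manipulation, using only the defining quadratic relations $A^2 = aA + bI_n$ and $B^2 = cB + dI_n$. There is no conceptual obstacle here; the difficulty, if any, is purely one of bookkeeping, since the expression $(A+B)\bigl((a+c)I_n - (A+B)\bigr)$ expands into several non-commuting cross terms.

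First I would expand the product, obtaining
\[
(A+B)\bigl((a+c)I_n - (A+B)\bigr) = (a+c)(A+B) - A^2 - AB - BA - B^2.
\]
Substituting $A^2 = aA+bI_n$ and $B^2=cB+dI_n$ makes the quadratic-in-$A$ and quadratic-in-$B$ parts collapse, yielding
\[
(A+B)\bigl((a+c)I_n - (A+B)\bigr) \;=\; cA + aB - AB - BA - (b+d)\,I_n.
\]
This is the key identity: the expression is \emph{linear} in each of $A$ and $B$, plus a mixed part $-AB-BA$ which is symmetric in the two matrices.

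Second, I would check commutation with $A$. The summands $cA$ and $(b+d)I_n$ pose no problem, so it suffices to show that $A$ commutes with $aB - AB - BA$. Computing the two sides,
\[
A(aB - AB - BA) = aAB - A^2 B - ABA = aAB - (aA+bI_n)B - ABA = -bB - ABA,
\]
and symmetrically
\[
(aB - AB - BA)A = aBA - ABA - BA^2 = aBA - ABA - B(aA+bI_n) = -bB - ABA,
\]
so the two are equal. The verification for $B$ is entirely analogous: one discards the $aB$ and $(b+d)I_n$ terms, and checks $B(cA-AB-BA) = (cA-AB-BA)B$ using $B^2 = cB + dI_n$, both expressions reducing to $-dA - BAB$.

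Putting these two verifications together yields the conclusion. The symmetry of the argument under swapping $(A,a,b) \leftrightarrow (B,c,d)$ is a good sanity check, reflecting the symmetric role of $A$ and $B$ in the statement of the lemma.
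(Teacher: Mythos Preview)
Your proof is correct and follows essentially the same approach as the paper: expand $(A+B)\bigl((a+c)I_n-(A+B)\bigr)$, substitute the quadratic relations to obtain $cA+aB-AB-BA-(b+d)I_n$, and then verify commutation with $A$ (and, by symmetry, with $B$) by direct computation. The paper organizes the last step as a single line $AC-CA=a(AB-BA)-A^2B+BA^2=-bB+bB=0$, while you compute the two products separately, but the substance is identical.
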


\begin{proof}
Set $C:=(A+B)\bigl((a+c)I_n-(A+B)\bigr)$ and note that $C=(a+c)\,(A+B)-A^2-B^2-AB-BA=-(b+d)I_n+cA+aB-AB-BA$. \\
Therefore
$$AC-CA=a(AB-BA)-A^2B+BA^2=-b\,B+b\,B=0$$
and by symmetry $BC-CB=0$.
\end{proof}

\begin{cor}\label{commutecor}
Let $(A,B)\in \Mat_n(\K)^2$ such that $A^2=A$ and $B^2=0$. Then $A$ and $B$ both commute with $(A+B)(A+B-I_n)$.
\end{cor}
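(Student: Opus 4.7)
The statement is a direct specialization of Lemma \ref{corelemma}, so the plan is essentially to recognize this and perform a sign check; no new idea is needed.

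Concretely, I would take $(a,b,c,d)=(1,0,0,0)$ in Lemma \ref{corelemma}. The hypothesis $A^2=A$ says exactly that $A$ is $(1,0)$-quadratic, and $B^2=0$ says exactly that $B$ is $(0,0)$-quadratic, so the lemma applies. With these values, $(a+c)I_n-(A+B)=I_n-(A+B)=-(A+B-I_n)$, hence
\[
(A+B)\bigl((a+c)I_n-(A+B)\bigr)=-(A+B)(A+B-I_n).
\]
The lemma asserts that both $A$ and $B$ commute with the left-hand side, and since commutation with a matrix is equivalent to commutation with its opposite, both $A$ and $B$ commute with $(A+B)(A+B-I_n)$.

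There is no real obstacle here; the only thing worth being careful about is the sign convention in the factor $(a+c)I_n-(A+B)$ of the lemma versus the factor $A+B-I_n$ appearing in the corollary. Once this sign is absorbed, the proof is a single line invoking the lemma.
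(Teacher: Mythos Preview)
Your proof is correct and is exactly the intended argument: the paper states this result as an immediate corollary of Lemma~\ref{corelemma} with $(a,b,c,d)=(1,0,0,0)$, and your sign check is the only detail to verify.
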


\subsection{Reduction to elementary cases}

Let $M \in \Mat_n(\K)$. The minimal polynomial $\mu$ of $M$ splits up as
$$\mu(t)=P(t)\,t^p\,(t-1)^q,$$
where $P(t)$ has no root in $\{0,1\}$ and $(p,q)\in \N^2$.
Let $M_1$ (respectively, $M_2$) be a matrix associated to the endomorphism $X \mapsto MX$ on the vector space $\Ker P(M)$
(respectively, on the vector space $\Ker M^p(M-I_n)^q$).
By the kernel decomposition theorem, one has
$$M \sim M_1 \oplus M_2,$$
while $P(M_1)=0$ and $t^p(t-1)^q$ annihilates $M_2$.
If implication (iii) $\Rightarrow$ (i) holds for $M_1$ and $M_2$, then the reconstruction principle of Section \ref{recon}
shows that it also holds for $M$.

\vskip 2mm
Conversely, assume that $M=A+B$ for a pair $(A,B)\in \Mat_n(\K)^2$ with $A^2=A$ and $B^2=0$.
By Corollary \ref{commutecor}, $A$ and $B$ both commute with $M(M-I_n)$, and hence they stabilize
the subspaces $\im \bigl(M(M-I_n)\bigr)^n$ and $\Ker \bigl(M(M-I_n)\bigr)^n$
in the Fitting decomposition of $M(M-I_n)$. Using an adapted basis of $\K^n$ for this decomposition, we find $P \in \GL_n(\K)$, an integer
$p \geq 0$, matrices $A_1,B_1$ in $\Mat_p(\K)$ and matrices $A_2,B_2$ in $\Mat_{n-p}(\K)$ such that
$$A=P\bigl(A_1\oplus A_2\bigr) P^{-1} \quad \text{and} \quad B=P\bigl(B_1\oplus B_2\bigr) P^{-1},$$
the matrices $M_1:=A_1+B_1$ and $M_2:=A_2+B_2$ being both $(1,0,0,0)$-quadratic sums,
with $M_1(M_1-I_p)$ non-singular and $M_2(M_2-I_{n-p})$ nilpotent. In other words,
$M_1$ has no eigenvalue in $\{0,1\}$ and $M_2$ is triangularizable with $\Sp(M_2) \subset \{0,1\}$.
If implication (i) $\Rightarrow$ (iii) holds for both $M_1$ and $M_2$, then it clearly holds for $M$.

\vskip 2mm
\noindent We conclude that equivalence (i) $\Leftrightarrow$ (iii) needs to be proven only in the following special cases:
\begin{enumerate}[(a)]
\item $M$ has no eigenvalue in $\{0,1\}$;
\item $M$ is triangularizable with $\Sp(M) \subset \{0,1\}$.
\end{enumerate}

\section{The case $M$ has no eigenvalue in $\{0,1\}$}\label{pasdevpdans01}

\subsection{A lemma on companion matrices}

\begin{Not}
Given a monic polynomial $P=t^n-a_{n-1}t^{n-1}-\dots-a_1t-a_0 \in \K[t]$, we denote its \emph{companion matrix} by
$$C(P):=\begin{bmatrix}
0 & 0 & \dots  & & 0 & a_0 \\
1 & 0 & & & 0 & a_1 \\
0 & 1 & 0 & \dots & 0 & a_2 \\
 &  & \ddots & \ddots & & \vdots\\
\vdots &  & &  1 & 0 & a_{n-2} \\
0 & \dots & \dots & 0 & 1 & a_{n-1}
\end{bmatrix} \in \Mat_n(\K).$$
\end{Not}

\begin{Not}
For $E \in \Mat_p(\K)$, we set
$$U_E:=\begin{bmatrix}
I_p & E \\
I_p & 0_p
\end{bmatrix} \in \Mat_{2p}(\K).$$
\end{Not}

We start with two easy lemmas on the matrices of type $U_E$.

\begin{lemme}\label{simU}
Given two similar matrices $E$ and $E'$ of $\Mat_p(\K)$, the matrices
$U_E$ and $U_{E'}$ are similar.
\end{lemme}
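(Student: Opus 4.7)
The statement is essentially a direct conjugation exercise. The plan is to exhibit an explicit similarity between $U_E$ and $U_{E'}$ built from the similarity between $E$ and $E'$.

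First, I would fix a matrix $P \in \GL_p(\K)$ such that $E' = P\,E\,P^{-1}$ (which exists by hypothesis). Then I would consider the block-diagonal matrix $Q := P \oplus P \in \GL_{2p}(\K)$, which is invertible with inverse $P^{-1} \oplus P^{-1}$. Computing the conjugate block-by-block gives
$$Q \,U_E\, Q^{-1} = \begin{bmatrix} P & 0 \\ 0 & P \end{bmatrix} \begin{bmatrix} I_p & E \\ I_p & 0_p \end{bmatrix} \begin{bmatrix} P^{-1} & 0 \\ 0 & P^{-1} \end{bmatrix} = \begin{bmatrix} PP^{-1} & PEP^{-1} \\ PP^{-1} & 0_p \end{bmatrix} = \begin{bmatrix} I_p & E' \\ I_p & 0_p \end{bmatrix} = U_{E'}.$$

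There is no real obstacle here: the construction is forced by the block structure of $U_E$, since the top-left and bottom-left corners being $I_p$ essentially require the two diagonal blocks of the conjugating matrix to be equal in order to preserve that shape. The entire argument is a one-line block computation, and I would present it as such with no further commentary.
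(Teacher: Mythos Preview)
Your proof is correct and follows exactly the same approach as the paper: conjugating $U_E$ by the block-diagonal matrix $P \oplus P$ (the paper writes $R \oplus R$) to obtain $U_{E'}$. The only difference is that you spell out the block computation, whereas the paper leaves it as ``a straightforward computation.''
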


\begin{proof}
Choosing $R \in \GL_p(\K)$ such that $E'=RER^{-1}$, a straightforward computation shows that
$$U_{E'}=(R \oplus R)\, U_E\, (R \oplus R)^{-1}.$$
\end{proof}

Conjugating by a well-chosen permutation matrix, the following result is straightforward:

\begin{lemme}\label{sumU}
Given square matrices $A$ and $B$, one has
$U_{A \oplus B} \sim U_A \oplus U_B$.
\end{lemme}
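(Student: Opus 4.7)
The plan is a direct block-matrix computation, made clean by first writing both matrices in a common block layout. Suppose $A\in \Mat_a(\K)$ and $B\in \Mat_b(\K)$, so that $A\oplus B\in \Mat_{a+b}(\K)$ and $U_{A\oplus B}\in \Mat_{2(a+b)}(\K)$. The first step is to refine the $2\times 2$ block form of the definition: using the row-and-column partition $(a,b,a,b)$, the block-diagonality of $A\oplus B$ gives
$$U_{A\oplus B}=\begin{bmatrix} I_a & 0 & A & 0 \\ 0 & I_b & 0 & B \\ I_a & 0 & 0 & 0 \\ 0 & I_b & 0 & 0 \end{bmatrix},$$
whereas $U_A\oplus U_B$, written with the partition $(a,a,b,b)$, is
$$U_A\oplus U_B=\begin{bmatrix} I_a & A & 0 & 0 \\ I_a & 0 & 0 & 0 \\ 0 & 0 & I_b & B \\ 0 & 0 & I_b & 0 \end{bmatrix}.$$

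The key observation is then purely combinatorial: the second matrix is obtained from the first by swapping the second and third block-rows and simultaneously swapping the second and third block-columns. Concretely, let $\sigma$ be the permutation of $\lcro 1,2(a+b)\rcro$ that, acting on the four consecutive blocks of sizes $a$, $b$, $a$, $b$, leaves the first and fourth blocks fixed and exchanges the middle two. Let $P$ be the associated permutation matrix. Since $P$ is orthogonal, conjugation by $P$ performs the same permutation on both rows and columns, and a direct inspection of the four block-rows yields
$$P\,U_{A\oplus B}\,P^{-1}=U_A\oplus U_B,$$
which establishes the similarity.

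There is no serious obstacle here; the only care needed is the bookkeeping of block sizes when $A$ and $B$ have different dimensions, which is why I would make the block partition $(a,b,a,b)$ explicit rather than treating $U_{A\oplus B}$ as a $2\times 2$ matrix with blocks of size $a+b$. Note that Lemma~\ref{simU} is not needed for the proof.
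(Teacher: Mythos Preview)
Your argument is correct and is precisely the approach the paper has in mind: the paper states only that the result is ``straightforward'' after ``conjugating by a well-chosen permutation matrix,'' and you have made that permutation explicit. The only point worth noting is that you have correctly handled the change of block partition from $(a,b,a,b)$ to $(a,a,b,b)$ when the middle blocks are exchanged.
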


We now examine the case $E$ is a companion matrix.
The following lemma generalizes Lemma 14 of \cite{dSPidem2} and is the key to equivalence (i) $\Leftrightarrow$ (iii) in Theorem \ref{maintheo}
for a matrix with no eigenvalue in $\{0,1\}$:

\begin{lemme}\label{companionlemma}
Let $(\alpha,\beta)\in \K^2$. Let $P(t)$ be a monic polynomial of degree $n$.
Then
$$\begin{bmatrix}
\alpha\, I_n & C(P) \\
I_n & \beta\, I_n
\end{bmatrix} \sim C\bigl(P\bigl((t-\alpha)(t-\beta)\bigr)\bigr).$$
\end{lemme}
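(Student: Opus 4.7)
The plan is to identify $M := \begin{bmatrix} \alpha\,I_n & C(P) \\ I_n & \beta\,I_n \end{bmatrix}$ as a cyclic matrix with minimal polynomial $Q(t) := P\bigl((t-\alpha)(t-\beta)\bigr)$. Since any cyclic matrix (one whose minimal polynomial coincides with its characteristic polynomial) is similar to the companion matrix of that common polynomial, and since $Q$ is monic of degree $2n$ (the size of $M$), this yields $M \sim C(Q)$ at once.

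First, I would verify that $Q(M) = 0$ by the following block computation:
$$(M - \alpha\,I_{2n})(M - \beta\,I_{2n}) = \begin{bmatrix} 0 & C(P) \\ I_n & (\beta-\alpha)\,I_n \end{bmatrix} \begin{bmatrix} (\alpha-\beta)\,I_n & C(P) \\ I_n & 0 \end{bmatrix} = C(P) \oplus C(P).$$
Setting $N := (M - \alpha\,I_{2n})(M - \beta\,I_{2n})$, I then obtain $Q(M) = P(N) = P(C(P)) \oplus P(C(P)) = 0$.

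Second, to show that the minimal polynomial of $M$ has degree exactly $2n$, I would exhibit a cyclic vector. Take $v := \begin{bmatrix} e_1 \\ 0 \end{bmatrix}$, where $e_1$ is the first vector of the canonical basis of $\K^n$. Since $e_1$ is cyclic for $C(P)$, one has $N^i v = \begin{bmatrix} C(P)^i e_1 \\ 0 \end{bmatrix} = \begin{bmatrix} e_{i+1} \\ 0 \end{bmatrix}$ for $i = 0, \dots, n-1$, and because $N = M^2 - (\alpha+\beta) M + \alpha\beta\,I_{2n}$ is a polynomial in $M$, each of these vectors lies in the Krylov subspace $\Vect(M^k v \mid k \geq 0)$. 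Applying $M$ to $N^i v$ yields $\begin{bmatrix} \alpha e_{i+1} \\ e_{i+1} \end{bmatrix}$, and subtracting $\alpha\,N^i v$ gives $\begin{bmatrix} 0 \\ e_{i+1} \end{bmatrix}$, still for $i = 0, \dots, n-1$. The Krylov subspace therefore contains the entire canonical basis of $\K^{2n}$; hence $v$ is cyclic, the minimal polynomial of $M$ has degree at least $2n$, and combined with $Q(M) = 0$ it must equal $Q$.

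The argument is essentially a direct calculation and I do not foresee a serious obstacle. The one point requiring mild care is the choice of cyclic vector: the naive alternative $\begin{bmatrix} 0 \\ e_1 \end{bmatrix}$ fails when $P(0) = 0$ (the resulting Krylov subspace then misses $\begin{bmatrix} e_1 \\ 0 \end{bmatrix}$), whereas starting from $\begin{bmatrix} e_1 \\ 0 \end{bmatrix}$ works uniformly in $P$, since the construction relies only on $e_1$ being cyclic for $C(P)$ and on the lower-left block of $M - \alpha\,I_{2n}$ being $I_n$.
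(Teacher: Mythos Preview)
Your proof is correct. The block computation $(M-\alpha I_{2n})(M-\beta I_{2n})=C(P)\oplus C(P)$ checks out, and the cyclic-vector argument starting from $\begin{bmatrix} e_1 \\ 0 \end{bmatrix}$ is clean and works uniformly in $\alpha,\beta$ and $P$.

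As for comparison: the paper does not actually supply a proof of this lemma. It simply observes that the result was established as Lemma~14 of \cite{dSPidem2} under the additional hypothesis $\alpha\neq 0$, $\beta\neq 0$, and asserts that inspection of that earlier argument shows the hypothesis to be superfluous. Your write-up therefore goes further than the present paper by giving a self-contained argument that is manifestly independent of whether $\alpha$ or $\beta$ vanishes. Your closing remark about the alternative starting vector $\begin{bmatrix} 0 \\ e_1 \end{bmatrix}$ failing when $P(0)=0$ is a nice touch that explains exactly where a careless choice could reintroduce an unwanted nondegeneracy assumption.
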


Lemma \ref{companionlemma} was stated and proved in \cite{dSPidem2} with the extra condition that $\alpha \neq 0$ and $\beta \neq 0$,
but an inspection of the proof shows that this condition is unnecessary.

\begin{cor}\label{corcompanion}
Let $P \in \K[t]$ be a monic polynomial. Then the companion matrix $C\bigl(P(t(t-1))\bigr)$
is a $(1,0,0,0)$-quadratic sum.
\end{cor}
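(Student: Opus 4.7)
The plan is to invoke Lemma \ref{companionlemma} with the special values $\alpha=0$ and $\beta=1$, which immediately yields the similarity
$$C\bigl(P(t(t-1))\bigr) \sim \begin{bmatrix} 0_n & C(P) \\ I_n & I_n \end{bmatrix}=:M.$$
Since being a $(1,0,0,0)$-quadratic sum is invariant under similarity (as noted in Section 1.1), it suffices to exhibit an explicit decomposition of $M$ as the sum of an idempotent and a square-zero matrix.

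The natural candidate is to absorb the entire lower block-row into the idempotent part and push the off-diagonal block $C(P)$ into the square-zero part. Concretely, I would set
$$A:=\begin{bmatrix} 0_n & 0_n \\ I_n & I_n \end{bmatrix} \quad \text{and}\quad B:=\begin{bmatrix} 0_n & C(P) \\ 0_n & 0_n \end{bmatrix}.$$
A direct block computation gives $A^2=A$ (the second block-row of $A^2$ equals $[I_n\;\;I_n]$, same as $A$) and $B^2=0$ (strictly block-upper-triangular with a zero on the diagonal). Since $A+B=M$, this establishes that $M$, and hence the similar matrix $C\bigl(P(t(t-1))\bigr)$, is a $(1,0,0,0)$-quadratic sum.

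There is no real obstacle here beyond guessing the decomposition; once Lemma \ref{companionlemma} is available in its full generality (with $\alpha,\beta$ allowed to be $0$), everything is a short block-matrix verification. The only mild subtlety is the observation that the earlier version of the lemma in \cite{dSPidem2} excluded the case $\alpha=0$ or $\beta=0$, so the preceding paragraph of the paper explicitly lifts that restriction to make the values $\alpha=0,\beta=1$ usable here.
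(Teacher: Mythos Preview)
Your proof is correct and follows essentially the same approach as the paper. The only cosmetic difference is that the paper applies Lemma~\ref{companionlemma} with $(\alpha,\beta)=(1,0)$ rather than $(0,1)$, obtaining the idempotent $\begin{bmatrix} I_n & 0_n \\ I_n & 0_n \end{bmatrix}$ instead of your $\begin{bmatrix} 0_n & 0_n \\ I_n & I_n \end{bmatrix}$, with the same square-zero part $B$; the two decompositions are mirror images of each other and the argument is otherwise identical.
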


\begin{proof}
Indeed, Lemma \ref{companionlemma} shows, with $n:=\deg P$, that
$$C\bigl(P(t(t-1))\bigr) \sim A+B \quad \text{with} \quad A=\begin{bmatrix}
I_n & 0_n \\
I_n & 0_n
\end{bmatrix} \quad \text{and} \quad B=\begin{bmatrix}
0_n & C(P) \\
0_n & 0_n
\end{bmatrix}.$$
Obviously, $A^2=A$ and $B^2=0$, and hence $C\bigl(P(t(t-1))\bigr)$ is the sum of an idempotent and a square-zero matrix.
\end{proof}

\subsection{Application to $(1,0,0,0)$-quadratic sums}\label{equivcomp}

Let $M \in \Mat_n(\K)$.

\begin{itemize}
\item Assume that each invariant factor of $M$ is a polynomial of $t(t-1)$.
Then we may find monic polynomials $P_1,\dots,P_p$ such that
$$M \sim C\bigl(P_1(t(t-1))\bigr) \oplus \cdots \oplus C\bigl(P_p(t(t-1))\bigr).$$
Using Corollary \ref{corcompanion} and the reconstruction principle of Section \ref{recon},
we deduce that $M$ is a $(1,0,0,0)$-quadratic sum.

\item Conversely, assume that $M=A+B$ for some pair $(A,B) \in \Mat_n(\K)^2$ such that $A^2=A$ and $B^2=0$.
Assume furthermore that $M$ has no eigenvalue in $\{0,1\}$. This last assumption yields
$$\Ker A \cap \Ker B=\Ker(A-I_n) \cap \Ker B=\{0\}.$$
Therefore
$$\dim \Ker A \leq n-\dim \Ker B=\rk B \quad \text{and} \quad \dim \Ker(A-I_n) \leq \rk B.$$
Adding these inequalities yields $n \leq 2\rk B$. However $2\rk B \leq \rk B+\dim \Ker B=n$
since $\im B \subset \Ker B$. It follows that
$$\dim \Ker A=\dim \Ker(A-I_n)=\dim \Ker B=\rk B=\frac{n}{2}$$
and hence
$$\K^n=\Ker A\oplus \Ker B.$$
Set now $p:=\frac{n}{2}\cdot$ Using a basis of $\K^{2p}$ which is adapted to the decomposition $E=\Ker B \oplus \Ker A$,
we find $P \in \GL_n(\K)$ and matrices $C,D$ in $\Mat_p(\K)$ such that
$$A=P\,\begin{bmatrix}
I_p & 0_p \\
C & 0_p
\end{bmatrix} \,P^{-1} \quad \text{and} \quad
B=P\,\begin{bmatrix}
0_p & D \\
0_p & 0_p
\end{bmatrix} \,P^{-1}.$$
Using $\Ker (A-I_n)\cap \Ker B=\{0\}$, we find that $C$ is non-singular.
Setting $Q:=\begin{bmatrix}
I_p & 0 \\
0 & C
\end{bmatrix}$, we finally find some $D' \in \Mat_p(\K)$ such that
$$M=(PQ)\,\begin{bmatrix}
I_p & D' \\
I_p & 0_p
\end{bmatrix} \,(PQ)^{-1} \, \sim \, U_{D'}.$$
The rational canonical form of $D'$ yields monic polynomials $P_1,\dots,P_q$
such that $D' \sim C(P_1) \oplus \cdots \oplus C(P_q)$ and $P_k$ divides $P_{k+1}$ for every $k \in \{1,\dots,q-1\}$.
By Lemmas \ref{simU} and \ref{sumU}, this yields
$$M \sim U_{C(P_1)} \oplus \cdots \oplus U_{C(P_q)}.$$
Using Corollary \ref{corcompanion}, it follows that
$$M \sim C\bigl(P_1(t(t-1))\bigr) \oplus \cdots \oplus C\bigl(P_q(t(t-1))\bigr).$$
Finally, $P_k(t(t-1))$ divides $P_{k+1}(t(t-1))$ for every $k \in \{1,\dots,q-1\}$,
and hence $P_1(t(t-1)),\dots,P_q(t(t-1))$ are the invariant factors of $M$.
Since $M$ has no eigenvalue in $\{0,1\}$, we conclude that $M$ satisfies condition (iii) in Theorem \ref{maintheo}.
\end{itemize}

\noindent We conclude that equivalence (i) $\Leftrightarrow$ (iii) of Theorem \ref{maintheo} holds for any square matrix
with no eigenvalue in $\{0,1\}$.

\section{The case $M$ is triangularizable with eigenvalues in $\{0,1\}$}\label{vpdans01}

\subsection{A review of Wang's results}

In \cite[Lemma 2.3]{Wanggeneral}, Wang proved the following characterization of
pairs of nilpotent matrices $(M,N)$ for which the sequences $(n_k(M,0))_{k \geq 1}$ and $(n_k(N,0))_{k \geq 1}$
are $p$-intertwined (generalizing a famous theorem of Flanders \cite{Flanders}).

\begin{theo}[Wang]\label{Wangtheorem}
Let $p \in \N^*$ and $(M,N) \in \Mat_r(\K) \times \Mat_s(\K)$ be a pair of
nilpotent matrices. The following conditions are equivalent:
\begin{enumerate}[(i)]
\item The sequences $(n_k(M,0))_{k \geq 1}$ and $(n_k(N,0))_{k \geq 1}$ are $p$-intertwined.
\item There is a pair $(X,Y) \in \Mat_{r,s}(\K) \times \Mat_{s,r}(\K)$ such that $M^p=XY$,
$N^p=YX$, $MX=XN$ and $YM=NY$.
\end{enumerate}
\end{theo}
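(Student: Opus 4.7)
The plan is to prove the two implications separately, after first recasting $p$-intertwining as a termwise condition on the partitions. Writing the Jordan block sizes of $M$ at $0$ in weakly decreasing order $a_1 \geq a_2 \geq \cdots$ (padded by zeros) and similarly $b_1 \geq b_2 \geq \cdots$ for $N$, the sequences $(n_k(M,0))_{k \geq 1}$ and $(n_k(N,0))_{k \geq 1}$ are the conjugate partitions, and one checks by a short combinatorial argument that their $p$-intertwining is equivalent to the termwise bound $|a_i - b_i| \leq p$ for every $i \geq 1$. The two implications then reduce to statements about matching Jordan blocks whose sizes differ by at most $p$.

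For (i) $\Rightarrow$ (ii), I would place both $M$ and $N$ in Jordan form and construct $X,Y$ blockwise. For a matched pair of nilpotent Jordan blocks $(J_a, J_b)$ with $|a-b|\leq p$, say $a\geq b$, one takes $X$ to be the natural rectangular embedding (an $a\times b$ matrix with $I_b$ placed in the appropriate position) and $Y$ to be the $b\times a$ matrix extracted from the corresponding rows of $J_a^p$; direct verification shows that the four identities $J_aX=XJ_b$, $YJ_a=J_bY$, $XY=J_a^p$, $YX=J_b^p$ hold precisely when $a-b\leq p$. Excess blocks of $M$ unpaired in $N$ (forcibly of size $\leq p$, thanks to the intertwining) are matched with zero-dimensional factors. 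Taking direct sums produces the global pair $(X,Y)$.

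For (ii) $\Rightarrow$ (i), the crucial free consequence of $XY=M^p$ is the inclusion $\Ker Y \subseteq \Ker M^p$ (and symmetrically $\Ker X \subseteq \Ker N^p$). Iterating $MX=XN$ gives $M^kX=XN^k$ for all $k$, so both $X$ and $Y$ preserve the kernel filtrations. In the auxiliary case $\Ker X = 0$, the two inclusions force the equality $\Ker Y = \Ker M^p$, and I would then argue that $Y$ descends to a well-defined injection $\bar Y:\Ker M^{k+p}/\Ker M^{k+p-1} \hookrightarrow \Ker N^k/\Ker N^{k-1}$: well-definedness uses $Y(\Ker M^p)=0$ to see $Y(\Ker M^{k+p}) \subseteq \Ker N^k$, and injectivity follows from the chain $Yv\in\Ker N^{k-1} \Rightarrow YM^{k-1}v=0 \Rightarrow M^{k-1}v\in\Ker Y=\Ker M^p \Rightarrow v\in\Ker M^{k+p-1}$. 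This yields $n_{k+p}(M,0)\leq n_k(N,0)$ whenever $\Ker X=0$.

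The main obstacle is removing the auxiliary hypothesis $\Ker X=0$. Because $XN=MX$, the subspace $\Ker X\subseteq\K^s$ is $N$-stable, so $N$ descends to $\tilde N$ on the quotient $W':=\K^s/\Ker X$, and the pair $(X',Y')$ obtained by factoring $X$ through $W'$ and composing $Y$ with the quotient projection still satisfies the hypotheses of (ii) with $X'$ injective; applying the injective-$X$ argument yields $n_{k+p}(M,0)\leq n_k(\tilde N,0)$, and the standard fact that passing to a $\K[t]$-module quotient can only decrease each $n_k$ termwise then gives $n_{k+p}(M,0)\leq n_k(N,0)$. A symmetric argument with the roles of $(M,X)$ and $(N,Y)$ exchanged yields $n_{k+p}(N,0)\leq n_k(M,0)$, completing the proof of $p$-intertwining.
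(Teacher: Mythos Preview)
The paper does not actually prove this theorem: it quotes Wang \cite[Lemma 2.3]{Wanggeneral} and simply remarks that an inspection of Wang's argument shows it is valid over an arbitrary field. So there is no proof in the paper to compare yours against.

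Your proposal is a correct self-contained argument, and the two implications are handled cleanly. A couple of remarks. For (i) $\Rightarrow$ (ii), your blockwise construction is exactly right; the verification that the rectangular $X$ and the ``rows of $J_a^p$'' matrix $Y$ satisfy all four identities precisely when $|a-b|\le p$ is routine, and the treatment of unpaired blocks (size $\le p$, paired with a zero-dimensional factor) is the correct way to close the construction. For (ii) $\Rightarrow$ (i), the reduction to $\Ker X=0$ via the quotient by the $N$-invariant subspace $\Ker X$ is sound, and the injection $\bar Y$ you describe does give $n_{k+p}(M,0)\le n_k(\tilde N,0)$. The one step you label as a ``standard fact'' --- that $n_k(\tilde N,0)\le n_k(N,0)$ for a nilpotent quotient --- deserves one line of justification: writing $\dim (N^j)^{-1}(U)=\dim\Ker N^j+\dim(\im N^j\cap U)$, one gets
\[
n_k(\tilde N,0)=n_k(N,0)+\dim(\im N^k\cap U)-\dim(\im N^{k-1}\cap U)\le n_k(N,0)
\]
since $\im N^k\subseteq\im N^{k-1}$. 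With that line added, your proof is complete and field-independent, which is all the paper needs here.
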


Wang only considered the field of complex numbers but an inspection of his proof reveals
that it holds for an arbitrary field.

In \cite{Wanggeneral}, implication (i) $\Rightarrow$ (ii) of Theorem \ref{Wangtheorem}
is used, with $p=2$, to obtain the following result:

\begin{prop}\label{castrigdirect}
Let $M \in \Mat_n(\K)$ be a triangularizable matrix with eigenvalues in $\{0,1\}$
and assume that the sequences $(n_k(M,0))_{k \geq 1}$ and $(n_k(M,1))_{k \geq 1}$ are $2$-intertwined.
Then $M$ is a $(1,0,0,0)$-quadratic sum.
\end{prop}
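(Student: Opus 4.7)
My plan is to use the kernel decomposition theorem to put $M$ in block-diagonal form, then apply Wang's Theorem~\ref{Wangtheorem} with $p = 2$ to produce intertwining matrices $X$ and $Y$, and finally to assemble an explicit decomposition $M = A + B$ with $A^2 = A$ and $B^2 = 0$ out of the blocks $J_0$, $N_1$, $X$, and $Y$.

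By the kernel decomposition theorem one has $M \sim J_0 \oplus J_1$, where $J_0 \in \Mat_r(\K)$ is nilpotent and $N_1 := J_1 - I_s \in \Mat_s(\K)$ is nilpotent (with $r + s = n$). Since being a $(1,0,0,0)$-quadratic sum is invariant under similarity, one may replace $M$ by $J_0 \oplus J_1$; under this identification, the $2$-intertwining of $\bigl(n_k(M,0)\bigr)_{k \geq 1}$ and $\bigl(n_k(M,1)\bigr)_{k \geq 1}$ is precisely the $2$-intertwining of $\bigl(n_k(J_0,0)\bigr)$ and $\bigl(n_k(N_1,0)\bigr)$. Applying Theorem~\ref{Wangtheorem} with $p = 2$ to the pair of nilpotents $(J_0, N_1)$ then yields matrices $X \in \Mat_{r,s}(\K)$ and $Y \in \Mat_{s,r}(\K)$ with
$$J_0^2 = XY, \quad N_1^2 = YX, \quad J_0 X = X N_1, \quad Y J_0 = N_1 Y.$$

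The heart of the proof is to exploit these four algebraic relations to exhibit explicit block matrices $A, B \in \Mat_{r+s}(\K)$, with off-diagonal blocks built from $X$ and $Y$ and diagonal blocks polynomial in $J_0$ and $N_1$, such that $A + B = M$ and such that the cross-terms appearing in $A^2$ and $B^2$ telescope thanks to $XY = J_0^2$, $YX = N_1^2$, and the two commutation identities. The main technical difficulty is to find formulas that work uniformly over every field, including those of characteristic~$2$: a naive attempt starting from the trivial splitting $M = P + \mathcal{N}$ (where $P$ is the projector onto the $V_1$-summand and $\mathcal{N} := J_0 \oplus N_1$) and correcting by a block-diagonal $\Delta$ commuting with $\mathcal{N}$ leads to expressions involving $(I_r - 2 J_0)^{-1}$ and $(I_s + 2 N_1)^{-1}$, which fail to make sense when $\car(\K) = 2$. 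The role of Wang's matrices $X$ and $Y$ is precisely to supply a division-free \emph{off-diagonal} correction that bypasses this obstruction. Once the formulas are set up, verifying $A^2 = A$, $B^2 = 0$, and $A + B = M$ is a long but mechanical block computation invoking only the four Wang identities above.
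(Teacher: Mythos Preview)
Your outline coincides with the paper's: it records that Wang's sufficiency argument applies implication (i)$\Rightarrow$(ii) of Theorem~\ref{Wangtheorem} with $p=2$, and asserts (without reproducing the details) that this ``holds for an arbitrary field.'' So as a \emph{plan} you are on the right track.

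The gap is that the plan stops exactly where the work begins: you never write down $A$ and $B$. You say that ``once the formulas are set up'' the verification is mechanical, but producing those formulas \emph{is} the proof. The simplest guess $B=\bigl[\begin{smallmatrix} J_0 & -X \\ Y & N_1 \end{smallmatrix}\bigr]$ already fails, since the Wang identities give $B^2=\bigl[\begin{smallmatrix} 0 & -2XN_1 \\ 2N_1Y & 0 \end{smallmatrix}\bigr]$, and no correct choice of blocks is indicated anywhere in your proposal. Your characteristic-$2$ discussion is also misdirected: the paper is explicit that Wang's sufficiency argument works over every field, and the division-by-$2$ obstruction appears only in the \emph{converse} direction (Proposition~\ref{castrigrecip}), which is precisely why Section~\ref{lastsection} exists. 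In particular the expression $(I_r-2J_0)^{-1}$ that you flag is harmless: $J_0$ is nilpotent, so $I_r-2J_0$ is unipotent and invertible over any field, and when $\car(\K)=2$ it is simply $I_r$.
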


Again, Wang's proof \cite[Lemma 2.2, ``Sufficiency" paragraph]{Wanggeneral} holds for an arbitrary field
and we shall not reproduce it. We deduce that implication (iii) $\Rightarrow$ (i) in Theorem \ref{maintheo}
holds when $M$ is triangularizable with eigenvalues in $\{0,1\}$.

\subsection{A necessary condition for being a $(1,0,0,0)$-quadratic sum}\label{lastsection}

Here, we prove the converse of Proposition \ref{castrigdirect}:

\begin{prop}\label{castrigrecip}
Let $M \in \Mat_n(\K)$ be a triangularizable matrix with eigenvalues in $\{0,1\}$.
Assume that $M$ is a $(1,0,0,0)$-quadratic sum. Then the sequences $(n_k(M,0))_{k \geq 1}$ and $(n_k(M,1))_{k \geq 1}$ are $2$-intertwined.
\end{prop}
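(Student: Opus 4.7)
My approach is to apply Wang's Theorem (Theorem \ref{Wangtheorem}) with $p=2$ in the direction (ii) $\Rightarrow$ (i). Let $K_0:=\Ker M^n$ and $K_1:=\Ker(M-I_n)^n$ be the generalized eigenspaces of $M$, so that $\K^n=K_0\oplus K_1$, and set $M_0:=M_{|K_0}$ and $N_1:=(M-I)_{|K_1}$, both nilpotent. Since $-N_1$ has the same Jordan structure at $0$ as $N_1$, the desired $2$-intertwinedness of $\bigl(n_k(M,0)\bigr)_{k\geq 1}$ and $\bigl(n_k(M,1)\bigr)_{k\geq 1}$ is equivalent to the $2$-intertwinedness of $\bigl(n_k(M_0,0)\bigr)$ and $\bigl(n_k(-N_1,0)\bigr)$. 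Applying Wang's theorem to the pair $(M_0,-N_1)$, it suffices to exhibit linear maps $X:K_1\to K_0$ and $Y:K_0\to K_1$ such that
$$M_0^2=XY,\quad N_1^2=YX,\quad M_0 X=-X N_1,\quad Y M_0=-N_1 Y.$$

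The starting point is the block decomposition of $A$ and $B$ with respect to $K_0\oplus K_1$. Since $M=A+B$ is block-diagonal, one has $B_{01}=-A_{01}$ and $B_{10}=-A_{10}$; set $X_0:=A_{01}$ and $Y_0:=A_{10}$. Expanding $A^2=A$, $B^2=0$, and using the identity $MB=AB$ (a consequence of $(M-A)B=B^2=0$), comparison of the off-diagonal blocks yields the two ``anti-intertwining'' relations $M_0X_0=-X_0N_1$ and $Y_0M_0=-N_1Y_0$; comparison of the diagonal blocks yields $A_{00}^2=A_{00}-X_0Y_0$, $B_{00}^2=-X_0Y_0$, and symmetric identities on $K_1$. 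From these one further derives the shift relations $M_0B_{00}=A_{00}(M_0-I)$ and $B_{00}M_0=(M_0-I)A_{00}$, together with the quadratic identity $M_0^2=M_0A_{00}+A_{00}M_0-A_{00}$ (and its analog on $K_1$). In particular, the third and fourth Wang conditions already hold for $X:=X_0$ and $Y:=Y_0$.

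The hard part, which I expect to be the main obstacle, is to arrange $M_0^2=XY$ and $N_1^2=YX$. The naive choice $(X_0,Y_0)$ gives instead $X_0Y_0=A_{00}(I-A_{00})$ and $Y_0X_0=A_{11}(I-A_{11})$, which generically differ from $M_0^2$ and $N_1^2$. I expect the correct $X$ and $Y$ to be built by modifying $X_0,Y_0$ via polynomials in $M_0$ and $A_{00}$ (respectively $N_1$ and $A_{11}$), chosen so that, after applying the shift relations $M_0B_{00}=A_{00}(M_0-I)$ and its symmetric counterpart, the product $XY$ telescopes to $M_0^2$ and $YX$ telescopes to $N_1^2$ while the anti-intertwining is preserved (which comes for free since any $\K$-polynomial combination of $M_0$ on the left and $N_1$ on the right commutes with the sign flip). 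Once such $X,Y$ are written down explicitly and the four relations verified by a direct computation, Theorem~\ref{Wangtheorem} applied to $(M_0,-N_1)$ concludes. An alternative route, which one could pursue if the above explicit construction proves too cumbersome, is to establish the rank inequalities $\rk M_0^{k+2}\leq\rk N_1^k$ and $\rk N_1^{k+2}\leq\rk M_0^k$ for all $k\geq 0$ directly from the reduced-word expansion of $M^k$ in the letters $A,B$ (using $MB=AB$, $BM=BA$, $B^2=0$), together with the commutation of $A,B$ with $C:=M(M-I)$ provided by Lemma~\ref{corelemma}.
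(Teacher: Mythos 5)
Your preparatory reductions are correct as far as they go: the block identities, the anti-intertwining relations $M_0X_0=-X_0N_1$ and $Y_0M_0=-N_1Y_0$, and the diagonal identities $X_0Y_0=A_{00}-A_{00}^2$, $Y_0X_0=A_{11}-A_{11}^2$, $M_0A_{00}+A_{00}M_0-A_{00}=M_0^2$ all check out, and applying Theorem \ref{Wangtheorem} with $p=2$ to the pair $(M_0,-N_1)$ is indeed a viable strategy. But the proof stops exactly at the decisive point: you never exhibit $X,Y$ with $XY=M_0^2$ and $YX=N_1^2$; you only record the expectation that $X_0,Y_0$ can be corrected so the products ``telescope'', and the fallback route via rank inequalities is likewise only a hope. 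That step is the whole content of the proposition, so as written there is a genuine gap. The missing idea is that the diagonal blocks are forced to commute with the nilpotent parts and are in fact explicit polynomials in them: your identity $M_0A_{00}+A_{00}(M_0-I)=M_0^2$ is a Sylvester equation whose solution is unique (if $M_0Z=Z(I-M_0)$ then $M_0^nZ=Z(I-M_0)^n$, and invertibility of $I-M_0$ forces $Z=0$), hence $A_{00}=(2M_0-I)^{-1}M_0^2$ and, symmetrically, $A_{11}=(I+2N_1)^{-1}(I+N_1)^2$; note that $2M_0-I$ and $I+2N_1$ are invertible in every characteristic because their arguments are nilpotent. Granting this, one computes $X_0Y_0=-M_0^2(I-M_0)^2(I-2M_0)^{-2}$ and $Y_0X_0=-N_1^2(I+N_1)^2(I+2N_1)^{-2}$, and the corrected pair $X:=-(I-2M_0)^2(I-M_0)^{-2}X_0$, $Y:=Y_0$ satisfies all four conditions of Theorem \ref{Wangtheorem} (the correction is an invertible polynomial in $M_0$, so the anti-intertwining survives, and $Y_0\,p(M_0)=p(-N_1)\,Y_0$ converts the factor when computing $YX$). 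Without some argument of this kind pinning down $A_{00}$ and $A_{11}$ — which you neither state nor prove, and which does not follow from the identities you list — there is no reason the products can be made equal to $M_0^2$ and $N_1^2$.

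It is worth noting how the paper circumvents this same difficulty by a different device: using Corollary \ref{commutecor}, the idempotent and the square-zero summand stabilize the subspaces $E_k=\Ker\bigl(M^k(M-I_n)^k\bigr)$, so one passes to the quotients $E_{k+3}/E_k$ and reduces to the case $M^3(M-I_n)^3=0$ (Lemma \ref{elelemme}); there the diagonal blocks of the square-zero summand come out exactly as $N-N^2$ and $N'+(N')^2$, whose squares are exactly $N^2$ and $(N')^2$, so Wang's theorem applies with no correction factor at all. Your plan, once completed as indicated above, would avoid that reduction and also work in every characteristic, but the completion — the commutation/uniqueness argument and the explicit $X,Y$ — is precisely what still has to be written down.
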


Proving this will complete our proof of Theorem \ref{maintheo}.

In \cite{Wanggeneral}, Wang proved Proposition \ref{castrigrecip} in the special case $\K=\C$.
An inspection shows that his proof works for an arbitrary field of characteristic not $2$,
but fails for a field of characteristic $2$ (due to Wang's systematic use of the division by $2$). Our aim is to give a proof that works regardless of the characteristic of $\K$.
In order to do this, we will reduce the situation to the one where no Jordan block of $M$ has a size greater than $3$
(in other words $M^3(M-I_n)^3=0$). Let us start by considering that special case:

\begin{lemme}\label{elelemme}
Let $M \in \Mat_n(\K)$ be a $(1,0,0,0)$-quadratic sum such that $M^3(M-I_n)^3=0$.
Then $n_3(M,0)\leq n_1(M,1)$ and $n_3(M,1) \leq n_1(M,0)$.
\end{lemme}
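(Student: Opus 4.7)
The plan is to reduce Lemma~\ref{elelemme} to Wang's Theorem~\ref{Wangtheorem} via an explicit block decomposition. Let $r := \rk A$ and pick a basis of $\K^n$ adapted to $\K^n = \im A \oplus \Ker A$. In this basis,
\[ A = \begin{pmatrix} I_r & 0 \\ 0 & 0 \end{pmatrix} \quad \text{and} \quad B = \begin{pmatrix} B_1 & B_2 \\ B_3 & B_4 \end{pmatrix}, \]
and the identity $B^2 = 0$ yields the four relations $B_1^2 = -B_2 B_3$, $B_4^2 = -B_3 B_2$, $B_1 B_2 = -B_2 B_4$, and $B_3 B_1 = -B_4 B_3$. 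A direct block computation gives $M(M - I_n) = \begin{pmatrix} B_1 & 0 \\ 0 & -B_4 \end{pmatrix}$, so the hypothesis $M^3(M - I_n)^3 = 0$ is equivalent to $B_1^3 = 0$ and $B_4^3 = 0$; in particular $B_1$ and $B_4$ are nilpotent of index at most~$3$.

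The heart of the proof is to establish the Jordan-structure identification $n_k(M, 0) = n_k(B_4, 0)$ and $n_k(M, 1) = n_k(B_1, 0)$ for every $k \geq 1$. For the first equality, I will define the linear map $\Phi: \Ker M^3 \to \Ker A$ by $\Phi(v) := (I_n - A) v$. Writing $v = (v_1, v_2)$ in block form, an expansion of $M^3 v$ shows that the top-block equation reads $(I_r + 3 B_1 + B_1^2) v_1 + B_2(I_{n-r} - B_4) v_2 = 0$. Since $B_1^3 = 0$, the factor $I_r + 3 B_1 + B_1^2$ is invertible, so $v_1$ is an explicit linear function of $v_2$; the bottom-block equation then turns out to hold automatically, thanks to the identity $B_3(I_r + 3 B_1 + B_1^2)^{-1} B_2 = -B_4^2$ (which follows from $B_3 B_2 = -B_4^2$ together with $B_3 B_1^k B_2 = 0$ for $k \geq 1$, a consequence of the four relations above and of $B_4^3 = 0$). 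Hence $\Phi$ is an isomorphism, and a parallel block computation gives
\[ \Phi \circ M|_{\Ker M^3} \circ \Phi^{-1} = B_4 + B_4^2 = B_4(I_{n-r} + B_4). \]
Since $I_{n-r} + B_4$ is invertible and commutes with $B_4$, this operator has the same kernel powers as $B_4$ and hence the same Jordan partition at~$0$. The identification $n_k(M, 1) = n_k(B_1, 0)$ follows from the same argument applied to the $(1,0,0,0)$-quadratic sum $I_n - M = (I_n - A) + (-B)$, which swaps the eigenvalues $0$ and $1$ and the roles of $B_1$ and $B_4$ (up to signs that do not affect Jordan structures).

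With these identifications in place, I apply Theorem~\ref{Wangtheorem} with $p = 2$ to the nilpotent matrices $B_1$ and $-B_4$, taking $X := -B_2$ and $Y := B_3$. The four $B^2=0$ relations translate exactly into Wang's hypotheses $B_1^2 = XY$, $(-B_4)^2 = YX$, $B_1 X = X(-B_4)$, and $Y B_1 = (-B_4) Y$. The theorem therefore yields that $(n_k(B_1))_{k \geq 1}$ and $(n_k(B_4))_{k \geq 1}$ are $2$-intertwined; since all Jordan blocks of $B_1$ and $B_4$ have size at most~$3$, this degenerates to the two inequalities $n_3(B_1) \leq n_1(B_4)$ and $n_3(B_4) \leq n_1(B_1)$. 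Transferring through the Jordan identifications gives exactly $n_3(M, 1) \leq n_1(M, 0)$ and $n_3(M, 0) \leq n_1(M, 1)$, which is the claim.

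The main obstacle is the double block computation establishing simultaneously the bijectivity of $\Phi$ and the identity $\Phi M|_{\Ker M^3} \Phi^{-1} = B_4(I_{n-r} + B_4)$. It rests on expanding $(I_r + 3 B_1 + B_1^2)^{-1}$ as a polynomial in $B_1$ and on the vanishing of $B_3 B_1^k B_2$ for every $k \geq 1$; this is precisely where the assumption $B_4^3 = 0$ (i.e., $M^3(M - I_n)^3 = 0$) is indispensable, and it is what makes the argument go through uniformly in every characteristic.
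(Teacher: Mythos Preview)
Your argument is correct and works uniformly in every characteristic, but it follows a genuinely different route from the paper's own proof.

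The paper decomposes $\K^n$ along the generalized eigenspaces of $M$, writing $M=(I_p+N)\oplus N'$ with $N^3=(N')^3=0$. In that basis the Jordan data $n_k(M,0)=n_k(N',0)$ and $n_k(M,1)=n_k(N,0)$ are immediate; the work lies in writing $B$ in block form, using the commutation of $B$ with $M(M-I_n)$ (Corollary~\ref{commutecor}) together with $M^2-MB-BM=M-B$ to solve explicitly $B_1=N-N^2$ and $B_4=N'+(N')^2$, and then reading off $N^2=(-B_3)B_2$, $(N')^2=B_2(-B_3)$ from $B^2=0$ to feed into Theorem~\ref{Wangtheorem}.

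You instead decompose along $\im A\oplus\Ker A$. Here the relations coming from $B^2=0$ are automatic and give exactly the hypotheses of Theorem~\ref{Wangtheorem} for $B_1$ and $-B_4$; the work is relocated to the Jordan identification $n_k(M,0)=n_k(B_4,0)$, $n_k(M,1)=n_k(B_1,0)$, which you obtain through the isomorphism $\Phi$ and the conjugation formula $\Phi\, M|_{\Ker M^3}\,\Phi^{-1}=B_4(I+B_4)$. Your verification that the bottom block equation of $M^3v=0$ is redundant (via $B_3 B_1^k B_2=(-1)^{k+1}B_4^{k+2}=0$ for $k\geq 1$) is precisely where $B_4^3=0$ enters, as you note.

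Both approaches are of comparable length and both avoid any division by $2$. The paper's route makes the Jordan identification trivial at the cost of solving for the diagonal blocks of $B$; your route makes the input to Wang's theorem trivial at the cost of the explicit computation of $M^3$ and of $\Phi M\Phi^{-1}$. The two computations are in some sense dual: the paper obtains $B_4=N'+(N')^2$ in the $M$-adapted basis, while you obtain $M|_{\Ker M^3}\sim B_4+B_4^2$ in the $A$-adapted basis.
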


\begin{proof}
We lose no generality in assuming that
$$M=\begin{bmatrix}
I_p+N & 0 \\
0 & N'
\end{bmatrix},$$
where $p+q=n$, $(N,N')\in \Mat_p(\K) \times \Mat_q(\K)$, and $N^3=0$ and $(N')^3=0$.
With the same block sizes, we may find some $B=\begin{bmatrix}
B_1 & B_3 \\
B_2 & B_4
\end{bmatrix} \in \Mat_n(\K)$ such that $B^2=0$ and $(M-B)^2=M-B$.
By Corollary \ref{commutecor}, $B$ commutes with $M(M-I_n)=\begin{bmatrix}
N^2+N & 0 \\
0 & (N')^2-N'
\end{bmatrix}$. It follows that $B_1$ commutes with $N+N^2$, whilst $B_4$ commutes with $N'-(N')^2$. \\
However $N=(N+N^2)-(N+N^2)^2$ and $N'=(N'-(N')^2)+(N'-(N')^2)^2$. Therefore
$B_1$ commutes with $N$, and $B_4$ commutes with $N'$. \\
Next, the identities $(M-B)^2=M-B$ and $B^2=0$ yield:
$$M^2-MB-BM=M-B.$$
We deduce:
$$N'B_2+B_2N=0 \quad ; \quad NB_3+B_3N'=0,$$
$$N^2+N=NB_1+B_1N+B_1=(2N+I_p)B_1 \quad \text{and} \quad (N')^2-N'=(2N'-I_q)B_4.$$
Therefore
$$B_1=(I_p+2N)^{-1}(N+N^2)=(I_p-2N+4N^2)(N+N^2)=N-N^2$$
and
$$B_4=(I_q-2N')^{-1}(N'-(N')^2)=(I_q+2N'+4(N')^2)(N'-(N')^2)=N'+(N')^2.$$
Using this, we compute
$$B^2=\begin{bmatrix}
N^2+B_3B_2 & ? \\
 ?          & (N')^2+B_2B_3
\end{bmatrix}.$$
Since $B^2=0$, we deduce that
$$N^2=(-B_3)B_2 \quad \text{and} \quad  (-N')^2=B_2(-B_3).$$
Recalling that
$$(-N')B_2=B_2N \quad \text{and} \quad N(-B_3)=(-B_3)(-N'),$$
Theorem \ref{Wangtheorem} yields $n_3(N,0) \leq n_1(-N',0)$ and $n_3(-N',0) \leq n_1(N,0)$,
i.e., $n_3(M,1) \leq n_1(M,0)$ and $n_3(M,0) \leq n_1(M,1)$.
\end{proof}

We finish by deducing the general case from the above special one:

\begin{proof}[Proof of Proposition \ref{castrigrecip}]
We think in terms of endomorphisms of the space $\K^n$.
Let $u$ be an endomorphism of $\K^n$ such that $u^n(u-\id)^n=0$,
and assume that there is an idempotent endomorphism $a$ and a square-zero endomorphism $b$ such that $u=a+b$. \\
By Corollary \ref{commutecor}, $E_k:=\Ker \bigl(u^k(u-\id)^k\bigr)$ is stabilized by $a$ and $b$ for every $k \in \N$.
Let $k \in \N$. Then $a$, $b$ and $u$ induce endomorphisms $a'$, $b'$ and $u'$ of
$E_{k+3}/E_k$, with $(a')^2=a'$, $(b')^2=0$, and $(u')^3(u'-\id)^3=0$ (as $u^3(u-\id)^3$ maps $E_{k+3}$ into $E_k$).
Applying Lemma \ref{elelemme} to $u'$, we find that
$n_3(u',1) \leq n_1(u',0)$ and $n_3(u',0) \leq n_1(u',1)$.
In order to conclude, it suffices to note that
$$\forall i \in \{1,2,3\}, \; n_i(u',0)=n_{k+i}(u,0) \quad \text{and} \quad n_i(u',1)=n_{k+i}(u,1).$$
Note indeed, using the kernel decomposition theorem, that the characteristic subspace of $u'$ for the eigenvalue $0$ is
$(\Ker u^{k+3} \oplus \Ker(u-\id)^k)/(\Ker u^k \oplus \Ker(u-\id)^k)$, and hence the nilpotent part of
$u'$ is similar to the endomorphism $v : x \mapsto u(x)$ of $\Ker u^{k+3}/\Ker u^k$.
However $\Ker v^i=\Ker u^{k+i}/\Ker u^k$ for every $i \in \{0,1,2,3\}$. Therefore
\begin{multline*}
n_i(u',0)=n_i(v,0)=(\dim \Ker u^{k+i}-\dim \Ker u^k)-(\dim \Ker u^{k+i-1}-\dim \Ker u^k)\\
=n_{k+i}(u,0)
\end{multline*}
for every $i \in \{1,2,3\}$. In the same way, one proves that $n_i(u',1)=n_{k+i}(u,1)$ for every $i \in \{1,2,3\}$. \\
The special cases $i=1$ and $i=3$ yield $n_{k+3}(u,1) \leq n_{k+1}(u,0)$ and $n_{k+3}(u,0) \leq n_{k+1}(u,1)$.
\end{proof}

\noindent This completes our proof of Theorem \ref{maintheo}.

\section{Addendum : a simplified proof of a result on linear combinations of idempotent matrices}\label{appendix}

In this last section, we wish to show how the strategy of Section \ref{lastsection}
may be adapted so as to yield a simplified proof of the following result of \cite{dSPidem2}:

\begin{prop}
Let $\alpha,\beta$ be distinct elements of $\K \setminus \{0\}$.
Let $M \in \Mat_n(\K)$ be an $(\alpha,0,\beta,0)$-quadratic sum such that $(M-\alpha I_n)^n(M-\beta I_n)^n=0$.
Then the sequences $(n_k(M,\alpha))_{k \geq 1}$ and $(n_k(M,\beta))_{k \geq 1}$ are $1$-intertwined.
\end{prop}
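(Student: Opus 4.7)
The plan mirrors the two-step strategy of Section~\ref{lastsection}: first handle an elementary case in which all Jordan blocks of $M$ have size at most~$2$, then reduce the general case to it via a quotient argument.

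For the elementary case, assume $(M-\alpha I_n)^2(M-\beta I_n)^2=0$ and aim to prove $n_2(M,\alpha)\leq n_1(M,\beta)$ and $n_2(M,\beta)\leq n_1(M,\alpha)$. Writing $M=A+B$ with $A^2=\alpha A$ and $B^2=\beta B$, the kernel decomposition theorem allows us to assume
$$M=\begin{bmatrix}\alpha I_p+N & 0 \\ 0 & \beta I_q+N'\end{bmatrix},\qquad N^2=(N')^2=0.$$
By Lemma~\ref{corelemma}, $B$ commutes with
$$M\bigl((\alpha+\beta)I_n-M\bigr)=\bigl(\alpha\beta I_p+(\beta-\alpha)N\bigr)\oplus\bigl(\alpha\beta I_q+(\alpha-\beta)N'\bigr).$$
Since $\alpha\neq\beta$, expanding $B=\begin{bmatrix}B_1 & B_3 \\ B_2 & B_4\end{bmatrix}$ in blocks yields $B_1N=NB_1$, $B_4N'=N'B_4$, $B_2N=-N'B_2$ and $NB_3=-B_3N'$. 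The identity $(M-B)^2=\alpha(M-B)$, combined with $B^2=\beta B$, reduces to $M^2-\alpha M=MB+BM-(\alpha+\beta)B$: its off-diagonal blocks collapse automatically thanks to the anticommutation relations just listed, while the diagonal ones read
$$\bigl((\alpha-\beta)I_p+2N\bigr)B_1=\alpha N,\qquad \bigl((\beta-\alpha)I_q+2N'\bigr)B_4=\beta(\beta-\alpha)I_q+(2\beta-\alpha)N'.$$
Only divisions by $\alpha-\beta$ appear (and $N,N'$ are square-zero), so inversion is legitimate in every characteristic, yielding $B_1=\frac{\alpha}{\alpha-\beta}N$ and $B_4=\beta I_q+\frac{\alpha}{\alpha-\beta}N'$.

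Plugging these into $B^2=\beta B$ and reading the diagonal blocks gives $B_3B_2=\frac{\alpha\beta}{\alpha-\beta}N$ and $B_2B_3=-\frac{\alpha\beta}{\alpha-\beta}N'$. Setting $X:=B_3$ and $Y:=\frac{\alpha-\beta}{\alpha\beta}B_2$ (legitimate since $\alpha\beta\neq 0$), the four identities $XY=N$, $YX=-N'$, $NX=X(-N')$ and $YN=(-N')Y$ are precisely the hypotheses of Theorem~\ref{Wangtheorem} for the pair $(N,-N')$ with $p=1$. It follows that $\bigl(n_k(N,0)\bigr)_{k\geq 1}$ and $\bigl(n_k(-N',0)\bigr)_{k\geq 1}$ are $1$-intertwined; since $n_k(-N',0)=n_k(N',0)=n_k(M,\beta)$ and $n_k(N,0)=n_k(M,\alpha)$, this closes the elementary case.

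For the general case, we follow the quotient method of the proof of Proposition~\ref{castrigrecip}. Set $E_k:=\Ker\bigl((u-\alpha\,\id)^k(u-\beta\,\id)^k\bigr)$; by Lemma~\ref{corelemma}, each $E_k$ is stable under both quadratic summands, so for every $k\in\N$ the endomorphism $u'$ induced by $u$ on $E_{k+2}/E_k$ remains an $(\alpha,0,\beta,0)$-quadratic sum and satisfies $(u'-\alpha\,\id)^2(u'-\beta\,\id)^2=0$. A dimension count identical to that in Proposition~\ref{castrigrecip} gives $n_i(u',\lambda)=n_{k+i}(u,\lambda)$ for $i\in\{1,2\}$ and $\lambda\in\{\alpha,\beta\}$, so the elementary case yields $n_{k+2}(u,\alpha)\leq n_{k+1}(u,\beta)$ and $n_{k+2}(u,\beta)\leq n_{k+1}(u,\alpha)$ for every $k\geq 0$, which is the desired $1$-intertwining. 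The main obstacle is the elementary case: the block bookkeeping must be executed carefully enough to exhibit the four Wang-type relations exactly, and the key insight allowing a single proof for all characteristics is that the only denominator ever appearing is $\alpha-\beta$, so division by~$2$ is never required.
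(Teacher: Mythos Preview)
Your proof is correct and follows essentially the same route as the paper's: reduce via the commutation lemma and the quotient construction of Proposition~\ref{castrigrecip} to the case $(M-\alpha I_n)^2(M-\beta I_n)^2=0$, then compute the diagonal blocks of one quadratic summand explicitly, read off product relations $XY=N$, $YX=-N'$, and invoke Flanders/Wang for $p=1$. The only cosmetic differences are that you split the $(\beta,0)$-quadratic summand $B$ where the paper splits the idempotent $A$ (after normalizing to $M=\alpha A+\beta B$), and you obtain the intertwining relations $NX=X(-N')$, $YN=(-N')Y$ directly from the commutation lemma rather than from the product identities; one small slip is your closing remark that ``the only denominator ever appearing is $\alpha-\beta$'', since you also divide by $\alpha\beta$ when defining $Y$, but this is harmless given $\alpha,\beta\neq 0$.
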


\begin{proof}
As in the proof of Proposition \ref{castrigrecip}, one
can use the commutation with $(M-\alpha I_n)(M-\beta I_n)=M(M-(\alpha+\beta)I_n)+\alpha \beta\,I_n$
(see Lemma \ref{corelemma})
to reduce the situation to the one where $(M-\alpha I_n)^2(M-\beta I_n)^2=0$.
In that case, we lose no generality in assuming that
$$M=\bigl(\alpha I_p+N\bigr) \oplus \bigl(\beta I_q+N'\bigr),$$
where $p+q=n$, $N \in \Mat_p(\K)$ and $N'\in \Mat_q(\K)$ satisfy $N^2=0$ and $(N')^2=0$.
Note that
$$(M-\alpha I_n)(M-\beta I_n)=(\alpha-\beta)\,(N \oplus (-N')).$$
Let then $A$ and $B$ be idempotent matrices such that $M=\alpha\,A+\beta\,B$.
Split
$$A=\begin{bmatrix}
A_1 & A_3 \\
A_2 & A_4
\end{bmatrix},$$
where $A_1,A_2,A_3,A_4$ are respectively $p \times p$, $q \times p$, $p \times q$ and $q \times q$ matrices. \\
By Lemma \ref{corelemma}, $A$ commutes with $(M-\alpha I_n)(M-\beta I_n)$; as $\alpha \neq \beta$, we deduce that $A_1$ commutes with $N$. \\
On the other hand, the identity
$(M-\alpha A)^2=\beta(M-\alpha A)$ yields:
$$\alpha(\alpha+\beta)A=\alpha\,(AM+MA)+\beta\,M-M^2.$$
Evaluating the upper-left blocks on both sides and using the commutation $A_1N=NA_1$, we deduce:
$$\alpha(\alpha+\beta)A_1=2\alpha\,(\alpha\,I_n+N)A_1+\beta\,(\alpha I_n+N)-(\alpha I_n+N)^2$$
and hence
$$\alpha\bigl((\beta-\alpha)I_n-2N\bigr)A_1=\alpha(\beta-\alpha)I_n+(\beta-2\alpha)N.$$
As $\alpha(\beta-\alpha) \neq 0$ and $N^2=0$, we deduce that
$$A_1=\Bigl(I_n+\frac{\beta-2\alpha}{\alpha (\beta-\alpha)}N\Bigr)\Bigl(I_n-\frac{2}{\beta-\alpha}N\Bigr)^{-1}
=I_n+\frac{\beta}{\alpha(\beta-\alpha)}\,N,$$
and it follows that the upper-left block of $B$ is $\frac{1}{\beta}\bigl(\alpha I_n+N-\alpha A_1\bigr)=\frac{\alpha}{\beta(\alpha-\beta)}\,N$.
By symmetry, one has $A_4=\frac{\beta}{\alpha(\beta-\alpha)}\,N'$.
We deduce that
$$A-A^2=\begin{bmatrix}
\frac{\beta}{\alpha(\alpha-\beta)}\,N-A_3A_2 & ? \\
? & \frac{\beta}{\alpha(\beta-\alpha)}\,N'-A_2A_3
\end{bmatrix}.$$
Setting $X:=\alpha(\alpha-\beta)A_3$ and $Y:=\frac{1}{\beta} A_2$, we find:
$$N=XY \quad \text{and} \quad -N'=YX.$$
The main theorem of \cite{Flanders} (or Theorem \ref{Wangtheorem} for $p=1$, noting that $NX=XYX=X(-N')$ and $YN=YXY=(-N')Y$)
then shows that the sequences $(n_k(N,0))_{k \geq 1}$ and $(n_k(-N',0))_{k \geq 1}$
are $1$-intertwined, i.e., the sequences $(n_k(M,\alpha))_{k \geq 1}$ and $(n_k(M,\beta))_{k \geq 1}$
are $1$-intertwined.
\end{proof}

\end{document}